\documentclass[a4paper,Haag duality]{mathscan}
\newtheorem{thm}{Theorem}[section]

\newtheorem{lem}[thm]{Lemma}        
\theoremstyle{definition}           
\newtheorem{rem}[thm]{Remark}       


\newcommand{\NI}{\noindent}

\newcommand{\bea}{\begin{eqnarray}}
\newcommand{\eea}{\end{eqnarray}}

\def \b #1 {\bf #1}
\newcommand{\IR}{I\!\!R}

\newcommand{\IC}{\mathbb{C}}

\newcommand{\cal}{\mathcal}

\newcommand{\clm}{{\cal M}}

\newcommand{\clf}{{\cal F}}

\newcommand{\clh}{{\cal H}}
\newcommand{\clp}{{\cal P}}

\newcommand{\clb}{{\cal B}}

\newcommand{\clj}{{\cal J}}
\newcommand{\cln}{{\cal N}}

\newcommand{\raro}{\rightarrow}

\newcommand{\vsp}{\vskip 1em}

\newcommand{\be}{\begin{equation}}
\newcommand{\ee}{\end{equation}}
\newcommand{\ben}{\begin{eqnarray*}}
\newcommand{\een}{\end{eqnarray*}}
\pagestyle{myheadings}

\begin{document}

\title {A mean ergodic theorem in von-Neumann algebras }

\author{ Anilesh Mohari }
\thanks{...}

\address{ The Institute of Mathematical Sciences, Taramani, CIT Campus, Chennai-600113}
\email{anilesh@imsc.res.in}

\begin{abstract} 
We explore a duality between von-Neumann's mean ergodic theorem in von-Neumann algebra and Birkhoff's mean ergodic theorem in the pre-dual Banach space of von-Neumann algebras.  
Besides improving known mean ergodic theorems on von-Neumann algebras, we prove Birkhoff's mean ergodic theorem for any locally compact second countable amenable group action 
on the pre-dual Banach space.

\end{abstract}
\maketitle 

\section{ Introduction:}

\vsp 
Let $\clh$ be a complex Hilbert space with inner product $<.,.>$ assumed to be conjugate linear in the first variable and $\clb(\clh)$ be the set of 
bounded operators on $\clh$. A $*$-sub-algebra $\clm$ of $\clb(\clh)$ is called von-Neumann algebra if $\clm$ is closed in weak$^*$ topology on $\clb(\clh)$ 
[Tak]. A triplet $(\clm,\tau,\phi)$ is called quantum dynamical system where $\clm$ is a von-Neumann algebra and $\tau$ is an unital completely positive map 
[St] with a faithful normal invariant state $\phi$. 

\vsp 
Let $(\clm,\tau,\phi)$ be a quantum dynamical system. It is quite some time now that it is known 
[La,Fr] that 
\be 
s_n(x)={1 \over n} \sum_{1 \le k \le n} \tau^n(x) \raro E(x)
\ee 
as $n \raro \infty$ in weak$^*$ topology i.e. $\sigma-$weak operator topology i.e. for each $\psi \in \clm_*$ 
$\psi(s_n(x)) \raro \psi(E(x))$ for all $x \in \clm$ as $n \raro \infty$. Given a sequence of weakly convergent normal states 
$\psi_n$ on $\clm$, the sequence need not be strongly convergent i.e. need not be Cauchy in Banach space norm of $\clm_*$. Such 
a statement however true if $\clm$ is a type-I von-Neumann algebra with center completely atomic. In other situation it is known 
to be false [De]. Thus the general theory no way ensures Birkhoff theorem which says that 
\be 
||\psi \circ s_n-\psi \circ E || \raro 0
\ee
as $n \raro \infty$ for all element $\psi \in \clm_*$. At this point we note one interesting point that (2) is valid for all $\psi \in \clm_*$
if (2) is valid for a dense subset of $\clm_*$ in the Banach space norm. For a proof one can use standard $3 \epsilon$ argument as the 
maps $||s_n-E|| \le 2$ for all $n \ge 1$. A Proof for (2) appears first in E.C. Lance's paper when $\tau$ is an $*$-automorphism. 
Preceding A. Frigerio's work [Fr], B. Kummerer [Ku] developed a general theory which includes a proof for (2) however assuming (1). For 
more detail account we refer to monograph [Kr]. Note that $s_n$ is a sequence of unital completely positive map. Our strategy is to develop a general theory 
valid for a sequence of unital completely positive maps on $\clm$ which we now describe below: 
   
\vsp 
Here our argument is fairly general which uses faithfulness of the normal state $\phi$ to identify $\clm$ with it's standard form [BR] 
$(\clh_{\phi},\pi_{\phi}(\clm),\clp_{\phi},\clj_{\phi},\Delta_{\phi},\zeta_{\phi})$ associated with $\phi$, where $\clh_{\phi}$ is the GNS space 
associated with $\phi$, $\Delta_{\phi},\clj_{\phi}$ are Tomita's modular operators associated with the closable anti-linear map 
$S_0:\pi_{\phi}(x)\zeta_{\phi} \raro \pi_{\phi}(x^*)\zeta_{\phi}$ with polar decomposition 
$\overline{S_0}= \clj_{\phi} \Delta^{1 \over 2}_{\phi}$   
and $\clp_{\phi}$ is Araki's pointed positive cone given by the closer 
of $\{ \pi_{\phi}(x) \clj_{\phi} \pi_{\phi}(x) \clj_{\phi} \zeta_{\phi}: x \in \clm \}$ in $\clh_{\phi}$. Tomita's theorem says that 
modular group $\sigma_t(x)=\Delta^{it}x\Delta^{-it},\;x \in \clb(\clh_{\phi})$ preserves $\pi_{\phi}(\clm)$, ( so it's commutant 
$\pi_{\phi}(\clm)'=\{ y \in \clb(\clh_{\phi}):yx=xy \}$ ) and $\clj_{\phi}\pi_{\phi}(\clm)\clj_{\phi}=\pi_{\phi}(\clm)'$.  In the following 
we will use symbol $x$ for $\pi_{\phi}(x)$ for simplicity of notation. 

\vsp 
Let $\clm_+$ be be the non-negative elements in $\clm$. A map $\tau:\clm \raro \clm$ is called positive if $\tau(\clm_+) \subseteq \clm_+$. We also set  
$$P^{1 \over 2}_{\phi}(\clm)=\{ \tau:\clm \raro \clm, \phi \tau \le \phi, \tau(1) \le 1,\;\;\mbox{positive map and}$$
$$\;\;\phi(\tau(x^*)\tau(x)) \le \phi(x^*x),\;x \in \clm \} $$
For an element $\tau \in P^{1 \over 2}_{\phi}(\clm)$, we define a contractive operator $T$
defined by 
\be 
T x \zeta_{\phi} = \tau(x)\zeta_{\phi}
\ee 
for all $x \in \clm$. It is clear that $P^{1 \over 2}_{\phi}(\clm)$ is a convex set as convex 
combination of two contractive operator on $\clh_{\phi}$ is also contractive. A natural topology on 
this convex is the relative topology i.e. we say a net $\tau_{\alpha} \raro \tau$ converges in 
$P^{1 \over 2}_{\phi}(\clm)$ if $T_{\alpha} \raro T$ in strong operator topology.   

\vsp 
We also have a unique dual element 
$\tau' \in P^{1 \over 2}_{\phi}(\clm')$ satisfying 
\be 
<y'\zeta_{\phi},\tau(x)\zeta_{\phi}>=<\tau'(y')\zeta_{\phi},x\zeta_{\phi}>
\ee 
for all $x \in \clm, y' \in \clm'$. We also denote by $T'$ the contractive operator on $\clh_{\phi}$ 
defined by 
$$T'y'\zeta_{\phi} = \tau'(y')\zeta_{\phi}$$  
Note by adjoin relation (4) above $T'$ is the Hilbert space adjoint map of $T$ and thus a contraction as
well. Thus $\tau' \in P^{1 \over 2}_{\phi}(\clm')$ if and only if $\tau \in P^{1 \over 2}_{\phi}(\clm)$.

\vsp 
Our main mathematical result says the following: 

\vsp 
\begin{thm} 
Let $(\clh_{\phi},\pi_{\phi}(\clm),\clj_{\phi},\Delta_{\phi},\clp_{\phi},\zeta_{\phi})$ be the 
standard form associated with a faithful normal state $\phi$ on von-Neumann algebra $\clm$. We identify $\clm$ with 
$\pi_{\phi}(\clm)$ in the following. Let $(\tau_n:n \ge 1 )$ be a sequence of elements in $P^{1 \over 2}_{\phi}(\clm)$ 
and $T'_n$ be the sequence of contractive operator associated with the dual sequence of elements $(\tau'_n:n \ge 1)$ in $P^{1 \over 2}_{\phi}(\clm')$. 
Then 
\be 
||\psi \tau_n -\psi \tau_m|| \raro 0
\ee 
as $m,n \raro \infty$ in $\clm_*$ for any $\psi \in \clm_*$ if 
$||[T'_n-T'_m]f|| \raro 0$ as $m,n \raro \infty$ for all $f \in \clh_{\phi}$. Furthermore if $||[T_n-T_m]f|| \raro 0$ as $m,n \raro \infty$ for all 
$f \in \clh_{\phi}$ then there exists an element $\tau \in P^{1 \over 2}_{\phi}(\clm)$ such that 
\be 
||\psi \tau_n -\psi \tau|| \raro 0
\ee 
as $n \raro \infty$ in $\clm_*$ for any $\psi \in \clm_*$. Conversely the first part of the statement is also true if the modular group 
commutes with each $\tau_n$ on $\clm$. 
\end{thm}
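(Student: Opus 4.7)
I would dispatch the three assertions in turn, reducing everything to the duality (4) combined with density of $\clm'\zeta_{\phi}$ in $\clh_{\phi}$ and standard $3\varepsilon$-arguments in $\clm_*$. For the forward implication I would first reduce to a dense subset of $\clm_*$: every normal functional is realised as $\psi(x)=\langle\xi,x\eta\rangle$ with $\xi,\eta\in\clh_{\phi}$, and since $\clm'\zeta_{\phi}$ is dense in $\clh_{\phi}$, the functionals $\psi_{y'_1,y'_2}(x):=\langle y'_1\zeta_{\phi},\,x\,y'_2\zeta_{\phi}\rangle$ with $y'_1,y'_2\in\clm'$ are norm-dense in $\clm_*$. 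For such a test functional, commuting $y'_2\in\clm'$ past $\tau_n(x)\in\clm$ and invoking (4) yields
\begin{equation*}
\psi_{y'_1,y'_2}\bigl(\tau_n(x)-\tau_m(x)\bigr)=\bigl\langle[T'_n-T'_m]\bigl((y'_2)^*y'_1\bigr)\zeta_{\phi},\,x\zeta_{\phi}\bigr\rangle,
\end{equation*}
and Cauchy--Schwarz together with $\|x\zeta_{\phi}\|\le\|x\|$ bounds the supremum over $\|x\|\le 1$ by $\|[T'_n-T'_m]((y'_2)^*y'_1)\zeta_{\phi}\|\to 0$. A $3\varepsilon$-argument, legitimated by $\|\psi\tau_n\|\le 2\|\psi\|$ (from positivity and $\tau_n(1)\le 1$), extends Cauchyness to every $\psi\in\clm_*$. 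For the furthermore part I would then set $\rho(\psi,x):=\lim_n\psi(\tau_n(x))$; for each fixed $x$ this is a bounded linear functional on $\clm_*$ of norm $\le\|x\|$, so $\clm=(\clm_*)^*$ produces a unique $\tau(x)\in\clm$ with $\psi(\tau(x))=\rho(\psi,x)$. Positivity, $\tau(1)\le 1$ and $\phi\tau\le\phi$ pass coordinatewise, while testing against vectors in $\clm'\zeta_{\phi}$ and using the strong limit $T:=\lim_n T_n$ identifies $\tau(x)\zeta_{\phi}=Tx\zeta_{\phi}$; contractivity of $T$ then gives $\phi(\tau(x)^*\tau(x))\le\phi(x^*x)$, placing $\tau\in P^{1/2}_{\phi}(\clm)$.

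For the converse under $\sigma_t\tau_n=\tau_n\sigma_t$, the key step is to realise the restriction of $T'_n$ to $\clm\zeta_{\phi}$ as coming from a map into $\clm$. The relation $T_nS_0=S_0T_n$ on $\clm\zeta_{\phi}$ is automatic since $\tau_n$ is $*$-preserving; the modular hypothesis upgrades to $T_n\Delta_{\phi}^{it}=\Delta_{\phi}^{it}T_n$ on $\clh_{\phi}$, and combined with the polar decomposition $\overline{S_0}=\clj_{\phi}\Delta_{\phi}^{1/2}$ this forces $T_n\clj_{\phi}=\clj_{\phi}T_n$, hence by adjunction $T'_n\clj_{\phi}=\clj_{\phi}T'_n$. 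Consequently $T'_n$ preserves $\clm\zeta_{\phi}$, and the formula $\tilde\tau_n(x):=\clj_{\phi}\tau'_n(\clj_{\phi}x\clj_{\phi})\clj_{\phi}$ defines an element of $P^{1/2}_{\phi}(\clm)$ satisfying $T'_nx\zeta_{\phi}=\tilde\tau_n(x)\zeta_{\phi}$; its $P^{1/2}_{\phi}$-properties descend from those of $\tau'_n\in P^{1/2}_{\phi}(\clm')$ through the map $y'\mapsto\clj_{\phi}y'\clj_{\phi}$, which preserves positivity, unit and $\phi$. Expanding the norm square and using $T_n=(T'_n)^*$ together with $T_n\tilde\tau_a(x)\zeta_{\phi}=\tau_n(\tilde\tau_a(x))\zeta_{\phi}$ then yields the polarization identity
\begin{equation*}
\|T'_nx\zeta_{\phi}-T'_mx\zeta_{\phi}\|^2=(\psi_x\tau_n-\psi_x\tau_m)\bigl(\tilde\tau_n(x)-\tilde\tau_m(x)\bigr),\qquad\psi_x(y):=\phi(x^*y)\in\clm_*.
\end{equation*}
The first factor tends to zero by hypothesis, the second is uniformly bounded in $\|x\|$, and density of $\clm\zeta_{\phi}$ with uniform contractivity of $T'_n-T'_m$ extends convergence to every $f\in\clh_{\phi}$.

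The technical heart is the converse: one must cleanly isolate the portion of the commutation of $T_n$ with $\clj_{\phi}$ and $\Delta_{\phi}$ that is automatic (from $*$-preservation giving $T_nS_0=S_0T_n$) from the portion that genuinely requires the modular hypothesis, and thereby realise the restriction of $T'_n$ to $\clm\zeta_{\phi}$ as a well-defined element $\tilde\tau_n\in P^{1/2}_{\phi}(\clm)$. Once this transfer is in hand, the polarization identity above converts the assumed $\clm_*$-Cauchyness of $\psi\tau_n$ into the Hilbert-space Cauchyness of $T'_nf$; by comparison the forward implication and the construction of $\tau$ are essentially formal consequences of (4) together with standard density arguments.
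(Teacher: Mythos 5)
Your proposal is correct. The forward implication and the construction of $\tau$ in the ``furthermore'' clause follow essentially the paper's own route: your identity $\psi_{y'_1,y'_2}\bigl((\tau_n-\tau_m)(x)\bigr)=\langle [T'_n-T'_m]((y'_2)^*y'_1)\zeta_{\phi},x\zeta_{\phi}\rangle$ is exactly inequality (8) of Lemma 2.1, followed by the same totality and $3\varepsilon$ arguments and the same weak$^*$-limit construction of $\tau$. Your treatment of the converse, however, is genuinely different and, I think, cleaner. The paper inserts $\Delta_{\phi}^{1/4}$, derives $\|\Delta_{\phi}^{1/4}(\tau_n-\tau_m)(x)\zeta_{\phi}\|^2\le 2\|x\|^2\|\psi_x\circ(\tilde\tau_n-\tilde\tau_m)\|$, and must then restrict to Gaussian-regularized analytic elements $x_{\delta}$ so that $\Delta_{\phi}^{1/4}\tau_n(x_{\delta})\zeta_{\phi}=\tau_n(\sigma_{-i/4}(x_{\delta}))\zeta_{\phi}$, obtaining strong convergence of $T_n$ on a dense set; note this lands on $T_n$ rather than on $T'_n$, and an extra (unwritten) step relating the two under the modular hypothesis is needed to reach the stated conclusion. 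You instead spend the modular hypothesis exactly once, to get $T_n\clj_{\phi}=\clj_{\phi}T_n$ and hence $T'_nx\zeta_{\phi}=\tilde\tau_n(x)\zeta_{\phi}$ with $\tilde\tau_n=\clj_{\phi}\tau'_n(\clj_{\phi}\cdot\clj_{\phi})\clj_{\phi}\in P^{1/2}_{\phi}(\clm)$, after which
\begin{equation*}
\|(T'_n-T'_m)x\zeta_{\phi}\|^2=(\psi_x\circ(\tau_n-\tau_m))\bigl((\tilde\tau_n-\tilde\tau_m)(x)\bigr)\le 2\|x\|\,\|\psi_x\tau_n-\psi_x\tau_m\|
\end{equation*}
settles the claim on the dense subspace $\clm\zeta_{\phi}$ with no analytic vectors at all, and directly for $T'_n$ as the statement requires; what you lose is only the contact with the $\Delta^{1/4}$-geometry that the paper's route exhibits. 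Two small points to tighten: (i) the passage from $T_n\Delta^{it}_{\phi}=\Delta^{it}_{\phi}T_n$ and $T_nS_0=S_0T_n$ on $\clm\zeta_{\phi}$ to $T_n\clj_{\phi}=\clj_{\phi}T_n$ deserves the explicit closure/core argument ($T_n$ preserves $D(\Delta^{1/2}_{\phi})$ and commutes with $\Delta^{1/2}_{\phi}$ there, and the range of $\Delta^{1/2}_{\phi}$ is dense); (ii) in the ``furthermore'' part you construct $\tau$ but should close the loop on (6): the norm limit of the Cauchy sequence $\psi\tau_n$ in the Banach space $\clm_*$ agrees pointwise with $\psi\tau$ and hence equals it --- the same implicit reading (both hypotheses in force) that the paper adopts, since (6) cannot be extracted from the $T_n$-hypothesis alone.
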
  
 
\vsp 
Duality argument used here is not entirely new and can be traced back to E.C. Lance work (Theorem 3.1 (iv) in [La]) where 
$\tau$ is an automorphism. However Lance's analysis uses dual of an automorphism is also an automorphism and thus made it possible to  use 
Kaplansky's density theorem to conclude (2). Thus the real merit of our method here is it's simplicity and 
it's generality. In general the converse statement of Theorem 1.1 without commuting property with modular group is false [Mo3]. For individual 
ergodic theorem in non commutative frame work of von-Neumann algebras, we refer to [La,Ku,JXu].  
  
\vsp 
The paper is organized as follows: In our next section we prove our main result Theorem 1.1. As an application of Theorem 1.1, in section 3 we prove Birkhoff's mean ergodic theorem 
for a second countable locally compact amenable group action $G$ on a von-Neumann algebra $\clm$.    

\vsp 
I thank Rainer Nagel for drawing my attention to the paper [Ku] and Q. Xu for drawing my attention to the paper [La]. I also thank anonymous referee for helpful comments which made it 
possible to re-organize the paper to it's present form.

\section{Mean ergodic theorem for completely positive map:}

\vsp 
A positive map $\tau:\clm \raro \clm$ is called normal if $l.u.b.\tau(x_{\alpha}) = \tau(l.u.b.x_{\alpha})$ for any increasing net $x_{\alpha} \in \clm_+$ bounded from above 
where $l.u.b.$ is the least upper bound. Let $\phi$ be a faithful normal $\tau$-invariant (sub-invariant) state on $\clm$ i.e. $\phi(\tau(x))=\phi(x)$ $(\; \phi(\tau(x)) \le \phi(x) \;)$ 
for all $x \in \clm_+$. The result that follows in this section has a ready generalization to a faithful weight [Tak] which we avoid as it makes very little qualitative improvement of our 
results once we incorporate Radon-Nykodym theorem for weights. One interesting observation here that once we know a positive map $\tau$ admits a faithful normal invariant or sub-invariant state, 
then $\tau$ is automatically normal [AC]. We start with a non-commutative counter part of E. Nelson's theorem [Ne] on commutative von-Neumann algebra.  

\vsp 
\begin{lem}
Let $\phi$ be a faithful normal state on $\clm$ which we identify with standard representation $\pi_{\phi}(\clm)$ where $(\clh_{\phi},\pi_{\phi},\zeta_{\phi})$ 
is the GNS space associated with $\phi$, $\phi(x)=<\zeta_{\phi},\pi_{\phi}(x)\zeta_{\phi}>$ and $\zeta_{\phi}$ is a cyclic and separating unit vector for $\pi_{\phi}(\clm)$ 
in $\clh_{\phi}$. Let $\tau \in P^{1 \over 2}_{\phi}(\clm)$. Then there exists a positive map $\tau':\pi_{\phi}(\clm)' \raro \pi_{\phi}(\clm)'$ so that 
\be 
<\tau'(y')\zeta_{\phi},x\zeta_{\phi}>=<y'\zeta_{\phi},\tau(x)\zeta_{\phi}>
\ee 
for all $x \in \pi_{\phi}(\clm)$ and $y' \in \pi_{\phi}(\clm)'$ and $\tau' \in P^{1 \over 2}_{\phi}(\clm')$. Further for two such positive map $\tau_1$ and $\tau_2$ we have  
\be 
|\psi_{y'_1,y'_2}(\tau_1(x)-\tau_2(x))| \le ||(\tau_1'-\tau_2')((y'_2)^*y'_1)\zeta_{\phi}|| ||x\zeta_{\phi}|| \le ||(\tau'_1-\tau'_2)((y'_2)^*y'_1)\zeta_{\phi}||\;||x||
\ee   
where $\psi_{y'_1,y'_2}(x)=<y'_1\zeta_{\phi},xy'_2\zeta_{\phi}>$. 

\end{lem}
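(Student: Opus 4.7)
The plan is to build $\tau'$ from the Hilbert space adjoint of the $L^2$-contraction induced by $\tau$, then verify its properties by direct computation against the adjoint relation (7), and finally derive (8) as a Cauchy--Schwarz estimate.

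First, I would define $T$ on the dense subspace $\clm \zeta_\phi \subset \clh_\phi$ by $T x\zeta_\phi = \tau(x)\zeta_\phi$ (well defined since $\zeta_\phi$ is separating for $\clm$). The $L^2$-bound $\phi(\tau(x^*)\tau(x)) \le \phi(x^*x)$ built into $P^{1 \over 2}_{\phi}(\clm)$ says exactly $\|T x\zeta_\phi\| \le \|x\zeta_\phi\|$, so $T$ extends to a contraction on $\clh_\phi$; set $T' := T^*$, itself a contraction. By construction $T'$ already satisfies the adjoint identity (7) on $\clm'\zeta_\phi \times \clm\zeta_\phi$.

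Next, for each $y' \in \clm'$ I need to produce $\tau'(y') \in \clm'$ with $\tau'(y')\zeta_\phi = T' y'\zeta_\phi$. This amounts to showing that the vector $T' y'\zeta_\phi$ is right $\clm$-bounded, i.e.\ that the densely defined map $x\zeta_\phi \mapsto x\, T' y'\zeta_\phi$ on $\clm\zeta_\phi$ extends to a bounded operator on $\clh_\phi$; the extension automatically commutes with left multiplication by $\clm$, hence lies in $\clm'$, and carries $\zeta_\phi$ to $T' y'\zeta_\phi$ as desired. This is the main technical step and, I expect, the principal obstacle: I plan to obtain the required bound $\|x T' y'\zeta_\phi\| \le C_{y'} \|x\zeta_\phi\|$ by combining the $L^2$-contractivity of $T$ with Cauchy--Schwarz in $\clh_\phi$ and the commutation $xy' = y'x$ for $x \in \clm$, $y' \in \clm'$, so that the action of $x$ can be transferred onto $y'\zeta_\phi$ and then estimated via the contractivity of $T$.

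Given $\tau'$, its membership in $P^{1 \over 2}_{\phi}(\clm')$ is a sequence of direct checks. For positivity, if $y' \ge 0$ and $x \ge 0$ in $\clm$, then commuting $(y')^{1 \over 2} \in \clm'$ past $\tau(x) \in \clm$ gives
\begin{equation*}
\langle \tau'(y')\zeta_\phi, x\zeta_\phi\rangle = \langle y'\zeta_\phi, \tau(x)\zeta_\phi\rangle = \langle (y')^{1 \over 2}\zeta_\phi, \tau(x)(y')^{1 \over 2}\zeta_\phi\rangle \ge 0,
\end{equation*}
and a cyclicity argument using $\tau'(y') \in \clm'$ converts this into $\tau'(y') \ge 0$. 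The conditions $\tau'(1) \le 1$ and $\phi \tau' \le \phi$ follow from the same template applied to $\phi\tau \le \phi$ and $\tau(1) \le 1$, while the $L^2$-contraction $\phi(\tau'(y')^*\tau'(y')) \le \phi((y')^*y')$ is immediate, being $\|T' y'\zeta_\phi\|^2 \le \|y'\zeta_\phi\|^2$.

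Finally, for (8), since $y'_2 \in \clm'$ commutes with $\tau_i(x) \in \clm$,
\begin{equation*}
\psi_{y'_1,y'_2}(\tau_i(x)) = \langle y'_1\zeta_\phi, \tau_i(x) y'_2\zeta_\phi\rangle = \langle (y'_2)^* y'_1\zeta_\phi, \tau_i(x)\zeta_\phi\rangle = \langle \tau'_i((y'_2)^* y'_1)\zeta_\phi, x\zeta_\phi\rangle,
\end{equation*}
so $\psi_{y'_1,y'_2}(\tau_1(x)-\tau_2(x)) = \langle (\tau'_1-\tau'_2)((y'_2)^* y'_1)\zeta_\phi, x\zeta_\phi\rangle$. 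Cauchy--Schwarz then yields the first inequality in (8), and $\|x\zeta_\phi\|^2 = \phi(x^*x) \le \|x\|^2$ yields the second.
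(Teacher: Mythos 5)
There is a genuine gap at the heart of your construction: the existence of $\tau'(y')$ as an element of $\clm'$. You define $T'=T^*$ and then need the vector $T'y'\zeta_{\phi}$ to be right $\clm$-bounded, i.e.\ $\|x\,T'y'\zeta_{\phi}\|\le C_{y'}\|x\zeta_{\phi}\|$ for all $x\in\clm$. The sketched route --- ``transfer the action of $x$ onto $y'\zeta_{\phi}$ via $xy'=y'x$ and then use contractivity of $T$'' --- does not go through, because $x$ commutes with $y'$ but not with $T'$; there is no identity letting you move $x$ past $T^*$. If you instead try to estimate by duality, $\|xT'y'\zeta_{\phi}\|=\sup_{\|g\|\le1}|\langle Tx^*g,\,y'\zeta_{\phi}\rangle|$, the contractivity of $T$ only yields a bound by $\|x^*g\|\le\|x\|$, i.e.\ control in terms of the operator norm $\|x\|$ rather than $\|x\zeta_{\phi}\|$, which is not enough to produce an element of $\clm'$. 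In fact $L^2$-contractivity alone cannot give this: the statement that $T^*$ maps $\clm'\zeta_{\phi}$ into itself is exactly where the positivity of $\tau$ and the sub-invariance $\phi\tau\le\phi$ must enter, and the standard tool is the commutant Radon--Nikodym theorem (Dixmier's lemma). That is what the paper does: for $y'\ge0$ the functional $\psi_{y'}(x)=\langle y'\zeta_{\phi},\tau(x)\zeta_{\phi}\rangle$ is positive and normal with $\psi_{y'}\le\|y'\|\,\phi$ (using positivity of $\tau$ and $\phi\tau\le\phi$), so Dixmier's lemma produces a unique $\tau'(y')\in\clm'_+$ satisfying (7); one then extends to all of $\clm'$ by linearity via $y'=y'_+-y'_-$ and $\clm'=\clm'_h+i\clm'_h$. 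Even recast in your language, proving right-boundedness of $T'y'\zeta_{\phi}$ amounts to showing $\omega_{T'y'\zeta_{\phi}}\le C\phi$ on $\clm_+$, which is again Dixmier's lemma in disguise, and the quantity $\langle T'y'\zeta_{\phi},xT'y'\zeta_{\phi}\rangle$ cannot be evaluated from the adjoint relation since $T'y'\zeta_{\phi}$ appears on both sides.

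The remainder of your argument is sound and agrees with the paper: once $\tau'(y')\in\clm'$ exists and satisfies (7), your verifications of positivity (via $\langle a\zeta_{\phi},\tau'(y')a\zeta_{\phi}\rangle\ge0$ and cyclicity), of $\tau'(1)\le1$ from $\phi\tau\le\phi$, of $\phi\tau'\le\phi$ from $\tau(1)\le1$, and of the $L^2$-contraction via $\|T'y'\zeta_{\phi}\|\le\|y'\zeta_{\phi}\|$ are all correct, as is the derivation of (8) by commuting $y'_2$ past $\tau_i(x)$ and applying Cauchy--Schwarz together with $\|x\zeta_{\phi}\|\le\|x\|$. So the fix is localized: replace the bounded-vector step by the Dixmier's lemma argument, and the rest of your proof stands.
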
 

\begin{proof} 
First part of the statement is not new and details have been worked out in Chapter 8 in [OP] which followed closely [AC]. Existence part is a standard result proved by a simple application of Dixmier's fundamental lemma [Di] which a normal state $\psi$ satisfying $\psi \le \lambda \phi$ on $\clm_+$ for some $\lambda > 0$ admits a representation 
$\psi(x) =< y'\zeta_{\phi},x\zeta_{\phi}>$ where $y'$ is non-negative element in the commutant $\pi_{\phi}(\clm)'$ of $\pi_{\phi}(\clm)$ and such an element 
is unique. For any non-negative element $y' \in \pi_{\phi}(\clm)'$ we consider non-negative normal functional $\psi_{y'}(x)=<y'\zeta_{\phi},\tau(x)\zeta_{\phi}>$ 
and check that $\psi_{y'} \le ||y'|| \phi $ and thus by Dixmier's lemma we get an unique non-negative element $\tau'(y') \in \pi_{\phi}(\clm)'$ so that (7) is satisfied. 
We can extend now the definition of $\tau'(y')$ for an arbitrary self adjoint element by linearity using the decomposition $y'=y'_+-y'_-$ with $y'_+y'_-=0$. For an arbitrary element $y$ we use $\pi_{\phi}(\clm)'=\pi_{\phi}(\clm)'_h+i\pi_{\phi}(\clm)'_h$, $\pi_{\phi}(\clm)'$ being a $*$-algebra. It is simple to check that (7) hold for any $y' \in \pi_{\phi}(\clm)', x  \in \pi_{\phi}(\clm).$  

\vsp 
That $\tau'$ takes non-negative element of $\clm'$ to non-negative elements follows by our construction and 
also note that $\tau'(I) \le I$ since $\phi(\tau(x)) \le \phi(x)$ for all $x \in \clm_+$ i.e. $\phi(\tau'(I)x) \le \phi(x)$ for some $\tau'(I) \in \clm'$. Since $\zeta_{\phi}$ 
is cyclic for $\clm$ and we have $<x\zeta_{\phi}, (I - \tau'(I))x\zeta_{\phi}> \ge 0$ for all $x \in \clm$, we conclude $\tau'(I) \le I$. Similarly $\phi (\tau'(x)) \le \phi(x)$ 
for $x \in \clm_+$ since $\tau(I) \le I$. The vector state $y' \raro <\zeta_{\phi},y'\zeta_{\phi}>$ being normal faithful on $\clm'$, invariant or more generally sub-invariant
property for $\tau'$ will ensure normal property of $\tau'$ from normality of the vector state.

\vsp 
The inequality (8) is fairly obvious by Cauchy-Schwarz inequality. 

\end{proof} 

\vsp 
\begin{proof}(Theorem 1.1) We use the inequality (8) given in Lemma 2.1 and our hypothesis on $\tau_n$ to prove that 
$|||\psi_{y'_1,y'_2}(\tau_n-\tau_m)|| \le ||(\tau_n'-\tau_m')((y'_2)^*y'_1)\zeta_{\phi}|| \raro 0$ as $m,n \raro \infty$ for 
all $y'_1,y'_2 \in \clm'$. Since the set of normal states $\{\psi_{y'_1,y'_2}:y'_1,y'_2 \in \clm' \}$ are total in $\clm_*$, 
by standard $3 \epsilon$ argument, we get (5).

\vsp 
For the second part of the statement we follow here [Fr] and apply compactness property for the bounded net $\tau_n(x)$ as $||\tau_n(x)|| \le ||x||$ to claim that 
it has unique limit point. Let $x^1_{\infty},x^2_{\infty}$ be two such limit points. If so then  
$x^1_{\infty}\zeta_{\phi}=x^2_{\infty}\zeta_{\phi}$ as $\tau_n(x)\zeta_{\phi} \raro Tx\zeta_{\phi}$ 
in strong operator topology, where $T$ is strong operator limit of $T_n$. Now by separating property of $\zeta_{\phi}$ for $\clm$, we conclude that $x^1_{\infty}=x^2_{\infty}$. 
Limit point being unique we conclude that $\tau_n(x) \raro x_{\infty}$ for
some $x_{\infty}$ in weak$^*$ topology. It is now a routine work to check that the map 
$x \raro \tau(x)=x_{\infty}$ is a positive map with $\phi \circ \tau \le \phi$ on $\clm_+$ and $\tau(I) \le I$. Thus $\tau$ is also normal. We claim that  
$\tau \in P^{1 \over 2}_{\phi}$. We compute the following simple steps with $x \in \clm$ and $y' \in \clm'$:
$$|<y'\zeta_{\phi},\tau(x)\zeta_{\phi}>|$$
$$=\mbox{lim}_{n \raro \infty}|<y'\zeta_{\phi},\tau_n(x)\zeta_{\phi}>|$$
$$\le ||y'\zeta_{\phi}||| \mbox{sup}_{n \ge 1} ||\tau_n(x)\zeta_{\phi}||$$
$$\le ||y'\zeta_{\phi}||\;||x\zeta_{\phi}||$$ 
This shows $\tau \in P^{1 \over 2}_{\phi}$. That $\tau$ satisfies (6) follows 
by (8) now as $T'_n \raro T'$ in strong operator topology.    

\vsp 
We need to prove the converse part of the first part of the statement which is not so immediate. We write the following identity: 
$$||\Delta^{1 \over 4}\tau(x)\zeta_{\phi}||^2 $$
$$= <\tau(x)\zeta_{\phi},\Delta^{1 \over 2}\tau(x)\zeta_{\phi}> $$
$$= <\tau(x)\zeta_{\phi},\clj \tau(x^*) \clj \zeta_{\phi}> $$
$$= <x\zeta_{\phi}, \tau'(\clj \tau(x^*) \clj) \zeta_{\phi}>$$
$$=\overline{<\clj x \clj \zeta_{\phi},\tilde{\tau}\tau(x^*)) \zeta_{\phi}>}$$
$$=<\clj x \clj \zeta_{\phi}, \tilde{\tau} \tau(x^*) \zeta_{\phi}>$$ 
where $\tilde{\tau}(x)=\clj\tau'(\clj x \clj) \clj$ for $x \in \clm$.
Similarly 
$$||\Delta^{1 \over 4}(\tau_{\alpha}-\tau)(x) \zeta_{\phi}||^2  $$
$$= <\clj x \clj \zeta_{\phi}, (\tilde{\tau}_{\alpha}-\tilde{\tau}) \circ (\tau_{\alpha}-\tau) (x^*) \zeta_{\phi}>$$ 
$$\le ||\psi_x \circ (\tilde{\tau}_{\alpha}-\tilde{\tau}) \circ (\tau_{\alpha}-\tau) (x^*)|| ||x\zeta_{\phi}||$$  
$$\le 2 ||x||^2 ||\psi_x \circ (\tilde{\tau}_{\alpha}-\tilde{\tau})||$$  
where $\psi_x(y)=<\clj x \clj \zeta_{\phi},y \zeta_{\phi}>$, a normal functional on $\clm$. This shows if $||\psi \circ (\tilde{\tau}_{\alpha} - \tilde{\tau})|| \raro 0$ for
all normal functional, then $\Delta^{1 \over 4}\tau_{\alpha}(x)\zeta_{\phi} \raro \Delta^{1 \over 4}\tau(x)\zeta_{\phi} $. So far we did not use our hypothesis
that each $\tau_{\alpha}$ commutes with modular group $\sigma_t$ on $\clm$. By restricting to analytic elements $x$ of the form $x_{\delta}= \int_{\IR}\sigma_t(x)d\mu_{\delta}$, where 
$\mu_{\delta}$ is the Gaussian probability measure on $\IR$ with variance $\delta > 0$, we check using commuting property with each 
$\tau_{\alpha}$ that $\tau_{\alpha}(x_{\delta})$ is also an analytic element for $\sigma_t$ and thus $\Delta^{1 \over 4} \tau_{\alpha}(x_{\delta})\zeta_{\phi} = 
\tau_{\alpha}(\sigma_{-{i \over 4}}(x_{\delta}))\zeta_{\phi}$ same is also true for $\tau$ as it also commutes with modular group $(\sigma_t)$ being weak$^*$ limit 
of commuting elements $\tau_{\alpha}$. Thus $||T_{\alpha}f -Tf|| \raro 0$ for $f \in \clh_0=\{x_{\delta}\zeta_{\phi}: x \in \clm, \delta > 0 \}$. 

\vsp 
Since $x_{\delta} \raro x$ in weak$^*$ topology as $\delta \raro 0$ [BR,Proposition 2.5.22], we get $\clh_0$ is dense ( for any $g \in \clh_0^{\perp}$ we have $<f,x_{\delta}\zeta_{\phi}>=0$ and since $x_{\delta} \raro x$ in 
weak operator topology, we get $<f,x\zeta_{\phi}>=0$. But $\zeta_{\phi}$ is cyclic for $\clm$ and so $g=0$. ). Thus $T_{\alpha}f \raro Tf$ strongly for all $f$ on a dense subspace of $\clh$ 
and so does for all $f \in \clh$ since the family of bounded operators involved in the limit are uniformly bounded, all are being contraction.
\end{proof}        

\vsp 
\begin{lem}  
Let $(\clm,\tau,\phi)$ be as in Lemma 2.1 i.e. $\tau \in P^{1 \over 2}_{\phi}(\clm)$. The map $T:x\zeta_{\phi} \raro \tau(x)\zeta_{\phi}$ has 
a unique contractive extension on $\clh_{\phi}$
and 
\be 
s_n(T)={1 \over n} \sum_{0 \le k \le n-1} T^k \raro P
\ee 
in strong operator topology where $P$ is the projection on the closed subspace $\{f: Tf=f \}$.
\end{lem}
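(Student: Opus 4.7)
The plan is to reduce Lemma 2.2 to the classical von Neumann mean ergodic theorem for a contraction on Hilbert space. The only ingredient specific to the present setup is the verification that $T$ is well-defined and contractive; once that is in hand the statement is entirely standard.

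First I would construct $T$. The defining condition $\phi(\tau(x^*)\tau(x)) \le \phi(x^*x)$ satisfied by $\tau \in P^{1 \over 2}_{\phi}(\clm)$ rewrites as $\|\tau(x)\zeta_{\phi}\| \le \|x\zeta_{\phi}\|$ for every $x \in \clm$. Since $\zeta_{\phi}$ is cyclic, $\clm\zeta_{\phi}$ is dense in $\clh_{\phi}$, so the prescription $x\zeta_{\phi} \mapsto \tau(x)\zeta_{\phi}$ extends uniquely by continuity to a contraction $T$ on $\clh_{\phi}$.

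Then I would invoke the orthogonal decomposition underlying von Neumann's theorem. For any Hilbert-space contraction $T$ one has $Tf=f$ iff $T^*f=f$: the identity $\langle T^*f, f\rangle = \langle f, Tf\rangle = \|f\|^2$ combined with $\|T^*f\| \le \|f\|$ forces equality in Cauchy--Schwarz. Consequently $\ker(I-T) = \ker(I-T^*)$, and
\[
\clh_{\phi} = \ker(I - T) \,\oplus\, \overline{\mathrm{Range}(I - T)},
\]
with $P$ the orthogonal projection onto the first summand. On $\ker(I-T)$ one has $s_n(T)f = f = Pf$ trivially, while on a vector $g = (I-T)h$ the sum telescopes to $s_n(T)g = (h - T^n h)/n$, whose norm is bounded by $2\|h\|/n$ and tends to $0$. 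Since $\|s_n(T)\| \le 1$ uniformly, a routine $3\epsilon$ argument propagates this to all of $\overline{\mathrm{Range}(I-T)}$, yielding $s_n(T) \to P$ strongly on $\clh_{\phi}$.

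I do not anticipate any real obstacle, since the mean ergodic step is classical once $T$ is identified as a contraction. The role of the hypothesis $\tau \in P^{1 \over 2}_{\phi}(\clm)$ is precisely to make the opening construction of $T$ go through; had $\tau$ only been positive or Schwarz without control on $\phi(\tau(x^*)\tau(x))$, this initial step would fail and no amount of Hilbert-space soft analysis could repair it.
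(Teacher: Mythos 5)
Your proposal is correct and follows the same route as the paper: verify from the defining inequality $\phi(\tau(x^*)\tau(x))\le\phi(x^*x)$ that $T$ is a well-defined contraction on the dense subspace $\clm\zeta_{\phi}$, then apply the classical von Neumann mean ergodic theorem for contractions. The only difference is that the paper simply cites [Ha] for the ergodic step, whereas you write out the standard proof (the identity $\ker(I-T)=\ker(I-T^*)$, the decomposition $\clh_{\phi}=\ker(I-T)\oplus\overline{\mathrm{Range}(I-T)}$, and the telescoping estimate), all of which is accurate.
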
 
\vsp 
\begin{proof} 
Since $\tau \in P^{1 \over 2}$, $T$ is a contraction on $\clh_{\phi}$ and the relation (9) is von-Neumann mean ergodic theorem [Ha] for contraction. 
\end{proof} 

\vsp         
\begin{thm} 
Let $\tau \in P^{1 \over 2}_{\phi}(\clm)$. Then 
$$s_n = {1 \over n} \sum_{0 \le k \le n-1} \tau^k \in CP^{1 \over 2}_{\phi}(\clm)$$ 
and 
\be 
||\psi \circ s_n - \psi \circ E|| \raro 0
\ee 
as $n \raro \infty$ for any $\psi \in \clm_*$ where $E:\clm \raro  \cln$ is a positive map with range equal to $\cln=\{x \in \clm:\tau(x)=x \}$ and $E \in P^{1 \over 2}_{\phi}(\clm)$. Further 
\vsp  
\NI (a) $Px\zeta_{\phi}=E(x)\zeta_{\phi}$ and $P$ is equal to the closed space generated by $\{x\zeta_{\phi}:x \in \cln \}$. 
\vsp   
\NI (b) $\cln$ is a von-Neumann algebra if and only if $E$ is unital ( so $\tau$ is unital ) satisfying bi-module property  
\be 
yE(x)z=E(yxz)
\ee 
for all $y,z \in \cln$ and $x \in \clm$. 
\end{thm}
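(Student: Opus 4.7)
The plan is to derive Theorem 2.3 from Theorem 1.1 and Lemma 2.2 applied to the ergodic averages, and then to read off the structural statements (a) and (b) from the identification of $E$ with the Hilbert-space projection $P$ onto $\ker(I-T)$. Since $P^{1 \over 2}_{\phi}(\clm)$ is closed under composition and under convex combinations, each $s_n$ lies in it, with associated contraction $s_n(T)={1 \over n}\sum_{0 \le k \le n-1}T^k$, whose dual is $s_n(T')$. By Lemma 2.2 both $s_n(T)\raro P$ and $s_n(T')\raro P'$ in strong operator topology, so both sequences are strongly Cauchy. Theorem 1.1 then produces $E \in P^{1 \over 2}_{\phi}(\clm)$ with $||\psi\circ s_n-\psi\circ E||\raro 0$ in $\clm_*$ for every $\psi \in \clm_*$.

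For (a), the norm convergence $s_n(x)\zeta_{\phi}=s_n(T)x\zeta_{\phi}\raro Px\zeta_{\phi}$ from Lemma 2.2 together with the weak convergence $s_n(x)\zeta_{\phi}\raro E(x)\zeta_{\phi}$ (read off from the normal functionals $\psi_{y',1}$, $y' \in \clm'$, which are total in $\clm_*$) forces $E(x)\zeta_{\phi}=Px\zeta_{\phi}$ by uniqueness of weak limits. Applying $T$ and using $TP=P$ then gives $\tau(E(x))\zeta_{\phi}=E(x)\zeta_{\phi}$, so the range of $E$ lies in $\cln$ by the separating property of $\zeta_{\phi}$. The inclusion $\cln\zeta_{\phi}\subseteq\mbox{Range}(P)$ is immediate, and the reverse follows from $Px\zeta_{\phi}=E(x)\zeta_{\phi}\in\cln\zeta_{\phi}$, yielding $\mbox{Range}(P)=\overline{\cln\zeta_{\phi}}$.

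For (b), I first record the identities $E\circ\tau=\tau\circ E=E$ (the first from $s_n\circ\tau-s_n={1\over n}(\tau^n-\mbox{id})\raro 0$ in norm, the second from $TP=P$ together with the separating property) and $E|_{\cln}=\mbox{id}$ (since $s_n|_{\cln}=\mbox{id}$). Forward direction: if $\cln$ is a von Neumann algebra then $1 \in \cln$, so $\tau(1)=1$ and $E(1)=1$, making $E$ a unital norm-one idempotent from $\clm$ onto the $C^*$-subalgebra $\cln$; Tomiyama's theorem then upgrades $E$ to a conditional expectation and yields the bi-module identity. Conversely, assume $E$ is unital with the bi-module property. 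For $a,b \in \cln$ the identity $E(ab)=aE(1)b=ab$ combined with $\tau\circ E=E$ gives $\tau(ab)=ab$, so $\cln$ is closed under multiplication; closure under adjoints comes from positivity of $\tau$ and weak closure from its normality (guaranteed by [AC] via the faithful normal sub-invariant state $\phi$), so $\cln$ is a von Neumann algebra.

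The main obstacle is the forward direction of (b): recognising that $E$ is not merely a contractive projection onto $\cln$ but an honest conditional expectation with the bi-module property. Tomiyama's theorem provides the clean route, and it applies precisely because the hypothesis supplies $\cln$ as a $C^*$-subalgebra; without this, the map $E$ could a priori fail the bi-module identity.
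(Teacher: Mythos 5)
Your proof is correct and, for the convergence statement and part (a), follows essentially the same route as the paper: $s_n\in P^{1\over 2}_{\phi}(\clm)$ because that set is stable under composition and convex combination, Lemma 2.2 makes both $s_n(T)$ and $s_n(T')=s_n(T)^*$ strongly convergent (to the same projection $P$, since a contraction and its adjoint have the same fixed vectors), Theorem 1.1 then yields $E$ and the $\clm_*$-convergence, and the identification $E(x)\zeta_{\phi}=Px\zeta_{\phi}$ together with $TP=P$ and the separating property of $\zeta_{\phi}$ gives $E(\clm)\subseteq\cln$ and $\mbox{Range}(P)=\overline{\cln\zeta_{\phi}}$; the paper reaches the same identities via $ns_n(\tau(x))=(n+1)s_{n+1}(x)-x$. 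Where you genuinely diverge is the forward direction of (b): you observe that $E$ is a unital positive idempotent onto the C$^*$-subalgebra $\cln$, hence of norm one by Russo--Dye, and invoke Tomiyama's theorem to obtain the bi-module identity. The paper explicitly mentions this "general result" route (citing [CE]) but deliberately replaces it with a short direct computation: since $P=P^*$ and $y^*,\,z^*y^*\in\cln$ are fixed by $P$, one gets $\langle y^*\zeta_{\phi},E(zx)\zeta_{\phi}\rangle=\langle z^*y^*\zeta_{\phi},Px\zeta_{\phi}\rangle=\langle y^*\zeta_{\phi},zE(x)\zeta_{\phi}\rangle$, which suffices because both vectors lie in $\overline{\cln\zeta_{\phi}}$. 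Your approach buys brevity at the cost of a substantial external theorem; the paper's computation is self-contained and shows the bi-module property is really just self-adjointness of the ergodic projection in the GNS space. Your converse argument (closure of $\cln$ under products via $E(ab)=aE(1)b$ and $\tau\circ E=E$, under adjoints via $*$-preservation of positive maps, and weak closure via normality) matches the paper's in substance. No gaps.
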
 

\vsp 
\begin{proof}
By symmetry of the argument used we also get $s_n(T') \raro P$ in strong operator topology, where $T'$ is the Hilbert space adjoint map of $T$. Since $s_n(T) \raro P$ in strong operator 
topology and $s_n(T') \raro P$ in strong operator topology, there exists $E \in P^{1 \over 2}_{\phi}(\clm)$ satisfying $||\psi \circ s_n - \psi \circ E|| \raro 0$ in $\clm_*$ for all 
$\psi \in \clm_*$ by Theorem 1.1. 

\vsp 
Since $ns_n(\tau(x))= n\tau (s_n(x))=(n+1)s_{n+1}(x)-x$, we get $E(\tau(x))=\tau(E(x))=E(x)$ for all $x \in \clm$. This shows $E(x) \in \cln$ and $E^2=E$. That 
$Px\zeta_{\phi}=\mbox{w-lim}_{n \raro \infty}s_n(T)x\zeta_{\phi}=\mbox{w-lim}s_n(x)\zeta_{\phi}=E(x)\zeta_{\phi}$. Now we fix a vector $f$ so that $Pf=f$ and $<f,x\zeta_{\phi}>=0$ for
all $x \in \cln$. Then for any $x \in \clm$ we have $<f,x\zeta_{\phi}>=<Pf,x\zeta_{\phi}>=<f,Px\zeta_{\phi}>=<f,E(x)\zeta_{\phi}>=0$. Thus $f=0$ since $\zeta_{\phi}$ is cyclic for 
$\clm$. This completes the proof of (a).  

\vsp 
We are left to prove (b). Bi-module property of $E$ follows from a general result [CE] provided $\cln$ is also a von-Neumann sub-algebra. Here we give a direct proof as follows. By our 
assumption that $\cln$ is a von-Neumann algebra we have $E(I)=I$. For bi-module property, it is enough if we show $E(x)z=E(x)z$ for all $x \in \clm$ and $z \in \cln$. By separating property 
of $\zeta_{\phi}$ for $\cln$, it is enough to verify following equalities:
$$<y^*\zeta_{\phi},E(zx)\zeta_{\phi}>=<y^*\zeta_{\phi},P zx \zeta_{\phi}>$$
$$=<Py^*\zeta_{\phi},zx\zeta_{\phi}>=<z^*y^*\zeta_{\phi},x \zeta_{\phi}>$$
(since $z^*y^* \in \cln$ being an algebra)
$$=<Pz^*y^*\zeta_{\phi},x \zeta_{\phi}>=<z^*y^*\zeta_{\phi},Px\zeta_{\phi}>$$
$$=<y^*\zeta_{\phi},zE(x)\zeta_{\phi}>$$  
This shows $zE(x)=E(zx)$ for all $z \in \cln$ and $x \in \clm$. We get the requited property 
by taking conjugate as $\cln,\clm$ are $*$-closed. 

\vsp 
Conversely bi-module property shows that $\cln$ is a $*$-algebra. Since $\tau$ is a normal map, $\cln$ is either $0$ or a non-degenerate von-Neumann algebra. Since by our assumption 
for the converse state $E$ is unital $I \in \cln$ and $\cln$ is a von-Neumann algebra. We are left to show $E(I)=I$ holds if and only if $\tau(I)=I$. Since $0 \le \tau(I) \le I$ we h
ave $0 \le \tau^{k+1}(I) \le \tau^k(I) \le I$ and thus $0 \le E(I)=\mbox{w-lim}\tau^k(I) \le \tau(I) \le I$. Thus $E(I)=I$ holds if and only if $\tau(I)=I$.      
\end{proof}  

\vsp 
We say a positive map $\tau$ on $\clm$ is an element in $P^{KS}(\clm)$ if 
\be 
\tau(x^*)\tau(x) \le \tau(x^*x)
\ee
for all $x \in \clm$. For such a $\tau$, we have $\tau(1) \in \clm_+$ and $\tau(1)\tau(1) \le \tau(1)$ 
i.e. $\tau(1) \le 1$. Further for some $x \in \clm$ we have   
$$ 
\tau(x^*)\tau(x) = \tau(x^*x) 
$$ 
if and only if 
\be 
\tau(x^*)\tau(y)=\tau(x^*y)
\ee
for all $y \in \clm$. This shows that $\clf=\{x: \tau(x^*)\tau(x)=\tau(x^*x),\;\tau(x)\tau(x^*)=\tau(xx^*) \}$
is $C^*$-sub algebra of $\clm$ and normal property of $\tau$ ensures that $\clf$ is a von-Neumann algebra if $I \in \clf$ 
i.e. $\tau(I)$ is a projection.  

\vsp 
We denote by 
$$P^{KS}_{\phi}(\clm)=\{ \tau \in P^{KS}(\clm): \phi(\tau (x)) \le \phi(x),\;x \in \clm_+ \}$$ 
Kadison-Schwarz inequality (12) and sub-invariant property $\phi \tau \le \phi$ on $\clm_+$ says 
that $P^{KS}_{\phi}(\clm) \subseteq P^{1 \over 2}_{\phi}(\clm)$. Whether $\tau \in P^{KS}_{\phi}(\clm)$ 
if and only if $\tau' \in P^{KS}_{\phi}(\clm')$ remains unknown. 

\vsp 
For $\tau \in P^{KS}_{\phi}(\clm)$, we claim that $\cln=\{x \in \clm: \tau(x) = x \}$ is a $*$-sub-algebra of $\clm$.  
Since $x^*x=\tau(x^*)\tau(x) \le \tau(x^*x)$ for all $x \in \cln$ and so $\phi(x^*x) \le \phi(\tau(x^*x)) \le \phi(x^*x)$ i.e.
by faithful property of $\phi$ we have $\tau(x^*x)=x^*x$. This shows that $\tau(x^*y)=\tau(x^*)\tau(y)=x^*y$ for
all $x,y \in \cln$. Further by (13) we have bi-module property: 
\be 
\tau(yxz)=y\tau(x)z
\ee 
for all $y,z \in \cln$ and $x \in \clm$. Thus $\cln$ is a von-Neumann algebra if and only if $\tau$ is unital.     

\begin{rem} 
One can deduce bi-module property of $E$ defined in Theorem 2.3 from bi-module property of $\tau$ as follows. 
Applying bi-module property of $\tau$ repeatedly, we get
$$y\tau^k(x)z = \tau^k(yxz)$$
for all $y,z \in \cln$ and $x \in \clm$. Thus same bi-module property holds for $s_n$ and so for it's weak$^*$ limiting map $E$. 
\end{rem}

\vsp 
A positive map $\tau:\clm \raro \clm$ is called $n$-{\it positive} if 
$$\tau \otimes I_n:(x^i_j) \raro (\tau(x^i_j))$$ 
is also positive on $M_n(\clm)$, $(n \times n)$ matrices with elements in $\clm$. $\tau$ is called completely positive [St] if $\tau \otimes I_n$ is positive for each $n \ge 1$. 

\begin{lem} 
Let $(\clm,\tau,\phi)$ be as in Lemma 2.1. Then $\tau'$ is $n-$positive if and only if $\tau$ is $n-$positive.
\end{lem}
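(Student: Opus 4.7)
The approach is to deduce the lemma from Lemma 2.1 by amplifying $\tau$ to $\tilde\tau:=\mathrm{id}_{M_n(\IC)}\otimes\tau$ on the tensor product von Neumann algebra $M_n(\clm)=M_n(\IC)\otimes\clm$, and then identifying its dual $\tilde\tau'$ with $\mathrm{id}\otimes\tau'$ up to the natural anti-isomorphism $M_n(\IC)\cong M_n(\IC)^{\mathrm{op}}$.

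First I would equip $M_n(\clm)$ with the faithful normal state $\Phi=\tau_n\otimes\phi$, where $\tau_n$ is the normalized trace on $M_n(\IC)$, and work in its standard form. The GNS Hilbert space can be identified with $\IC^n\otimes\IC^n\otimes\clh_\phi$, with cyclic-and-separating vector $\zeta_\Phi=\frac{1}{\sqrt{n}}\sum_i e_i\otimes e_i\otimes\zeta_\phi$; the representation sends $E_{ij}\otimes x$ to $|e_i\rangle\langle e_j|\otimes I\otimes\pi_\phi(x)$, so the commutant is $M_n(\IC)^{\mathrm{op}}\otimes\clm'$, acting by right multiplication on the second $\IC^n$ factor and by $\clm'$ on $\clh_\phi$. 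Assuming $\tau$ is $n$-positive, $\tilde\tau$ is positive on $M_n(\clm)$; a routine matrix-entry computation using $\Phi(X^*X)=\frac{1}{n}\sum_{i,j}\phi(x_{ij}^*x_{ij})$ for $X=(x_{ij})$ reduces each of the defining inequalities of $P^{1\over 2}_\Phi$ for $\tilde\tau$ entry-wise to the corresponding inequality for $\tau\in P^{1\over 2}_\phi(\clm)$, so $\tilde\tau\in P^{1\over 2}_\Phi(M_n(\clm))$ and Lemma 2.1 applies.

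The crucial step is the identification $\tilde\tau'=\mathrm{id}_{M_n(\IC)^{\mathrm{op}}}\otimes\tau'$. Testing the defining relation (7) on elementary tensors $X=E_{ij}\otimes x$ and $Y=E_{kl}^{\mathrm{op}}\otimes y'$, the inner products on the $\IC^n\otimes\IC^n$ factor contribute only a scalar $\frac{1}{n}\delta_{ik}\delta_{jl}$, and (7) collapses to $\langle\tau'(y')\zeta_\phi,x\zeta_\phi\rangle=\langle y'\zeta_\phi,\tau(x)\zeta_\phi\rangle$, which is (4). Thus Lemma 2.1 yields positivity of $\mathrm{id}_{M_n(\IC)^{\mathrm{op}}}\otimes\tau'$ on $M_n(\IC)^{\mathrm{op}}\otimes\clm'$. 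Composing with the transpose $A\mapsto A^t$, a $*$-anti-isomorphism $M_n(\IC)\to M_n(\IC)^{\mathrm{op}}$ that preserves the positive cone, I then obtain positivity of $\mathrm{id}_{M_n(\IC)}\otimes\tau'$ on $M_n(\clm')$, i.e., $\tau'$ is $n$-positive. The converse direction is identical with $\tau$ and $\tau'$ interchanged, invoking $(\tau')'=\tau$ as recorded after (4).

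The hard part—really the only nontrivial point—is the book-keeping around the standard-form commutant $(\clm\otimes M_n(\IC))'=\clm'\otimes M_n(\IC)^{\mathrm{op}}$ and the consequent need to trade $M_n^{\mathrm{op}}$-positivity for $M_n$-positivity via the transpose; once this identification is in place, the conclusion follows directly from Lemma 2.1.
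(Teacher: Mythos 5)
Your argument is correct and follows essentially the same route as the paper: amplify to $M_n(\clm)$ with the state $\phi\otimes tr_n$, identify the commutant in the GNS representation as $M_n(\IC)\otimes\clm'$ (up to the second tensor leg), show the dual of the amplified map is the amplification of $\tau'$, and invoke Lemma 2.1 together with the symmetry $(\tau')'=\tau$ for the converse. Your explicit handling of the $M_n(\IC)^{\mathrm{op}}$ versus $M_n(\IC)$ bookkeeping via the transpose is just a more careful rendering of the paper's phrase ``via the isomorphism $\pi_n(M_n(\clm))'\equiv M_n(\clm')$'', and is harmless since the transpose is a $*$-isomorphism onto the opposite algebra.
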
 
 
\begin{proof}  
This is well known and for a proof using Tomita's modular theory we refer to chapter 8 in [OP]. We can avoid use of Tomita's modular 
relation while proving $n$-positivity of $\tau'$ as follows. In the following without loss of generality we assume $\clm$ is identified with it's standard 
form associated with $\phi$ as in Lemma 2.1. We have already proved that $\tau'$ is positive in Lemma 2.1. For $n$ positivity of $\tau'$, 
we consider the GNS space $(\clh_n,\pi_n,\zeta_n)$ of the faithful normal invariant state $\phi_n= \phi \otimes tr_n$ on $M_n(\clm)$, $n \times n$ 
matrices with entries in $\clm$. Now we consider the positive map $\hat{\tau}_n: \pi_n(M_n(\clm)) \raro \pi_n(M_n(\clm))$ defined by 
$\hat{\tau}_n(\pi_n((x^i_j)))= (\pi_n(\tau(x^i_j)))$ assuming $\tau$ is $n-$positive. We have $\clh_n \equiv \clh_{\phi} \otimes \clh_{tr_n}$ where
$(\clh_{tr_n},\rho_n,\Omega_n)$ is the GNS representation of $M_n(\IC)$ with respect to normalized trace $tr_n$. If we identify $\clh_{tr_n} \equiv \IC^n \otimes \IC^n$ 
then $\rho_n(\!M_n(\IC)) \equiv \!M_n(\IC) \otimes I_n$ and $\pi_n(\!M_n(\IC)) \equiv \clm \otimes \!M_n(\IC) \otimes I_n$ and $\pi_n(\!M_n(\IC))' \equiv \clm' \otimes I_n \otimes \!M_n(\IC)$.   

\vsp 
Thus $\hat{\tau}_n \equiv \tau \otimes \rho_n$.  With this identification it is clear now that $(\hat{\tau}_n)' \equiv \tau' \otimes \rho'_n$ where $\rho'_n$ is adjoint map of $\rho_n$ given by Lemma 2.1 
and positive. Further $\tau' \otimes \rho'_n \equiv \tau' \otimes I_n$ via the isomorphism $\pi_n(M_n(\clm))' \equiv M_n(\clm')$. Thus positive property of $\tau'_n=\tau' \otimes I_n$ follows 
from that of $\tau_n=\tau \otimes I_n$.  
\end{proof}

\begin{rem}
It is a well known that for 2-positive map $\tau$ with $\tau(I) \le I$, Kadison-Schwarz inequality (12) holds [Ka]. Thus bi-module property (14) holds for 2-positive 
unital $\tau$. In such a case $E$ is also unital $2-$positive map satisfying bi-module property (11). As an application of Theorem 2.3 we get a proof for Lance's 
theorem for unital 2-positive map $\tau$.       
\end{rem}
             
\vsp 
\begin{thm}  
Let $(\clm,\tau_t,t \ge 0,\phi)$ be a semi-group of completely positive unital map with a faithful normal 
invariant state on $\clm$. Then there exists a norm one projection $E:\clm \raro  \cln$ onto $\cln$ so that 
\be 
||\psi \circ s_{\lambda} - \psi \circ E|| \raro 0
\ee
as $\lambda \raro 0$ for any $\psi \in \clm_*$ where 
$$s_{\lambda} = \lambda \int e^{- \lambda t} \tau_t dt$$ 
and $E$ is the norm one normal projection onto $\cln=\{x: \tau_t(x)=x,\;t \ge 0 \}$.    
\end{thm}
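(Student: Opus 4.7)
The plan is to reduce Theorem 2.7 to Theorem 1.1 and Theorem 2.3 by replacing the Cesàro averages $s_n$ with the Abel resolvent averages $s_\lambda$, and the mean ergodic theorem for a single contraction with its continuous one-parameter version.

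First I would verify the basic set-up. Since each $\tau_t$ is completely positive and unital, it is in particular 2-positive unital, so the Kadison–Schwarz inequality $\tau_t(x^*)\tau_t(x)\le\tau_t(x^*x)$ holds; invariance of $\phi$ then gives $\phi(\tau_t(x^*)\tau_t(x))\le\phi(x^*x)$, so $\tau_t\in P^{1/2}_{\phi}(\clm)$ for every $t\ge 0$. Convexity of $P^{1/2}_{\phi}(\clm)$ (noted after (3)) together with a routine approximation of the Bochner integral by Riemann sums gives $s_\lambda\in P^{1/2}_{\phi}(\clm)$ for each $\lambda>0$. Let $T_t$ denote the associated contraction on $\clh_\phi$ defined by $T_t x\zeta_\phi=\tau_t(x)\zeta_\phi$; under the (standing) assumption that $t\mapsto\tau_t$ is $\sigma$-weakly continuous, the identity $\|(T_t-I)x\zeta_\phi\|^2=2\phi(x^*x)-2\,\mathrm{Re}\,\phi(x^*\tau_t(x))$ together with uniform boundedness shows that $\{T_t\}$ is a strongly continuous semi-group of contractions on $\clh_\phi$, and likewise for the dual semi-group $\{T'_t\}$ furnished by Lemma 2.1.

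Next I would invoke the continuous Abel mean ergodic theorem for Hilbert space contraction semi-groups. Writing $A$ for the generator of $\{T_t\}$, the resolvent average
\be
S_\lambda = \lambda\int_0^{\infty} e^{-\lambda t} T_t\,dt = \lambda(\lambda-A)^{-1}
\ee
converges strongly, as $\lambda\to 0^+$, to the orthogonal projection $P$ onto $\{f\in\clh_\phi:T_t f=f\text{ for all }t\ge 0\}$; the same applies to the adjoint semi-group, giving $S'_\lambda\to P$ strongly. Applying Theorem 1.1 to any sequence $\lambda_n\to 0^+$ then produces an element $E\in P^{1/2}_{\phi}(\clm)$ with
\be
\|\psi\circ s_{\lambda_n}-\psi\circ E\|\to 0\quad\text{in }\clm_*
\ee
for every $\psi\in\clm_*$. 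A Cauchy argument along the net $\lambda\to 0^+$, using the same inequality (8), upgrades this to norm convergence along the continuous parameter.

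Finally I would identify $E$ and $\cln$. A change of variable in $\tau_s\circ s_\lambda$ gives
\be
\tau_s\circ s_\lambda-s_\lambda=(e^{\lambda s}-1)\,s_\lambda-\lambda e^{\lambda s}\!\int_0^s e^{-\lambda t}\tau_t\,dt,
\ee
and both terms tend to $0$ in $\sigma$-weak operator topology as $\lambda\to 0^+$, so $\tau_s\circ E=E$ and hence $\mathrm{range}(E)\subseteq\cln$; conversely $x\in\cln$ implies $s_\lambda(x)=x$ for all $\lambda$, so $E(x)=x$, making $E$ a projection onto $\cln$. Unitality of $\tau_t$ gives $s_\lambda(I)=I$, hence $E(I)=I$ and $\|E\|=1$. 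Since each $\tau_t$ is 2-positive and unital, the Kadison–Schwarz discussion preceding Remark 2.6 equips $\cln$ with the structure of a von-Neumann sub-algebra and $E$ with the bi-module property (11), exactly as in Theorem 2.3.

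The main obstacle is the passage from the discrete parameter to the continuous one: concretely, justifying strong continuity of $T_t$ and $T'_t$ from the given hypothesis on $\tau_t$, and applying the correct continuous-parameter version of the Hilbert space mean ergodic theorem for the Abel means $\lambda(\lambda-A)^{-1}$. Once these two ingredients are in place, the duality machinery of Theorem 1.1 carries the rest of the argument through unchanged.
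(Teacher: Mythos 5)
Your proposal is correct and follows essentially the same route as the paper: pass to the induced contraction semi-group $T_t$ on $\clh_{\phi}$ and its adjoint, apply the continuous (Abel-mean) von Neumann ergodic theorem to get $s_{\lambda}(T)\raro P$ and $s_{\lambda}(T')\raro P$ strongly, and then feed this into the duality inequality (8) and Theorem 1.1. The paper leaves the identification of $E$ as the norm one projection onto $\cln$ to the reader, so your explicit resolvent computation showing $\tau_s\circ E=E$ is a welcome filling-in of detail rather than a departure.
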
 

\vsp 
\begin{proof} 
Proof goes along the same line that of discrete time dynamics [Fr]. We indicate here a proof leaving the details to reader. We set contractive operator 
$$s_{\lambda}(T) = \lambda \int e^{- \lambda t} T_t dt$$ 
on $\clh_{\phi}$ where $T_tx\zeta_{\phi}=\tau_t(x)\zeta_{\phi}$ is the strongly continuous 
semi-group of contraction on $\clh_{\phi}$. As $\lambda \raro 0$, $s_{\lambda} \raro P$ in strong operator topology follows along the same line where $P$ is the projection on the closed subspace $\{f: T_tf=f: t \ge 0 \}$. 
Thus we can use strong convergence $s'_{\lambda}(T') \raro P'$ as $\lambda \raro 0$, where $P'$ is the projection on the closed subspace
$\{f: T'_tf=f: t \ge 0 \}$ where $T'_ty'\zeta_{\phi}=\tau'_t(y')\zeta_{\phi},\;y' \in \clm'$. Once again same argument shows that $P'=P$ to conclude that 
$||\psi_{y'_1,y'_2}(s_{\lambda}-s_{\mu})|| \raro 0$ 
in $\clm_*$ as $\lambda,\mu \raro 0$. Once more using sequential weak$^*$ 
compactness of $\clm$ and duality argument used in Theorem 1.1, we conclude that 
$||\psi \circ s_{\lambda} - \psi \circ E||$ as $\lambda \raro 0$ 
for all $\psi \in \clm_*$. 
\end{proof}           

\vsp 
\begin{rem} 
Let $\tau_n:\clm \raro \clm$ be a sequence of automorphisms with an invariant normal faithful state $\phi$ in Theorem 1.1. Since 
modular automorphism group of $\phi$ commutes with an automorphism preserving $\phi$, dual maps $\tau'_n$ are also automorphisms. 
Thus by theorem 1.1 for all $\psi \in \clm_*$ we have $$||\psi \tau_n -\psi \tau|| \raro 0$$ for some automorphism $\tau$ on $\clm$ 
if and only if unitary operators $u_n \raro u$ in strong operator topology where $u_n x \zeta_{\phi} = \tau_n^{-1}(x)\zeta_{\phi}$ 
and $ux\zeta_{\phi} = \tau^{-1}(x)\zeta_{\phi}$. Since $u_n \raro u$ in strong operator topology if and only if $u_n^* \raro u^*$ in 
strong operator topology, we can as replace the criteria with $\tau_n$ instead it's inverse (all elements being unitary 
$u_n \raro^{strong-op} u$ is equivalent to $u_n \raro^{weak-op} u$). Further we now specialize $\clm=L^{\infty}(\Omega,\clf,\mu)$ i.e. is a commutative von-Neumann 
algebra and $\tau_n(f)= f \circ \gamma_n$ for some probability measure $\mu$ preserving one to one and onto map bi-measurable maps $\gamma_n:
\Omega \raro \Omega$ modulo $\mu$ null set. Theorem 1.1 says that for some $\mu$ preserving bi-measurable one -one and onto map $\gamma:\Omega 
\raro \Omega$ we have 
$$||\psi \circ \gamma_n - \psi \circ \gamma||_1 \raro 0$$ 
for all $\psi \in L^1(\Omega,\clf,\mu)$ if and only if 
$$||f \circ \gamma_n - f \circ \gamma||_2 \raro 0$$ 
for all $f \in L^2(\Omega,\clf,d\mu)$.     
\end{rem} 

\underline{•}
  
\section{Birkhoff's ergodic theorem for amenable group:} 

\vsp 
A locally compact group $G$ is called amenable if for any compact $K \subset G$ and $\delta > 0$ there 
exists a compact subset $F \subset G$ so that 
$|F \Delta KF| < \delta |F|$ 
where we use $|.|$ to denote the left Haar measure on $G$. Such a set $F$ is called 
$(K,\delta)-$invariant. An infinite sequence $F_1,F_2,...$ of compact subsets of $G$ will 
be called {\it F\o lner sequence} if for every compact $K$ and $\delta >0$, $F_n$ is 
$(K,\delta)$-invariant for all $n \ge N(K,\delta)$, where $N(K,\delta)$ is an integer 
depending on $K$ and $\delta.$    
 
\vsp 
Let $G$ be a locally compact second countable amenable group acting from left bi-measurably on a probability 
space $(X,\clf,\mu)$ such that 
$$\mu \circ g^{-1}(E)  = \mu(E)$$ 
for all $g \in G$ and $E \in \clf$. For an element $f \in L^1(x,\clb,\mu)$, we set $s(F,f)$ 
to denote the average 
$$s(F,f)(x)= {1 \over |F|} \int_F f(gx)dm(g)$$
where $dm$ is a left-invariant Haar-measure on $G$. 

\vsp 
A basic problem in ergodic theory is to make a wise choice for a F\o lner sequence so that 
$$s(F_n,f) \raro \bar{f}(x)\;\;\mbox{a.e.}$$
and also in norm topology of $L^1(X,\clb,\mu)$ as $n \raro \infty$ where 
$\bar{f}(x)=E(f|\clf_e)(x)$ is the conditional 
expectation $f$ on the $\sigma$-field of $G$-invariant sets i.e. 
$$\clf_e = \{ E: \mu(E \Delta g^{-1}(E))=0,\;\forall g \in G \}$$      

\vsp 
A. Shulman [Sh] introduced in his thesis such an useful concept to prove an $L^2$-version of ergodic theorem [Sh] 
which played a key role in Lindenstrauss $L^1$ version of ergodic theorem [Lin]. We briefly recall [Sh] that a F\o lner 
sequence $\{F_n:n \ge 1\}$ of compact subsets of $G$ is called tempered if there exists a constant $C > 0$ such that 
$$|\bigcup_{ k < n}F_k^{-1}F_n| < C|F_n|$$ 
for all $n \ge 1$. Lindenstrauss [Lin] also proved that there exists a sub-sequence of a F\o lner sequence which satisfies 
A. Shulman's tempered condition. 

\vsp 
A. Shulman [Sh] proved that for each $f \in L^2(X,\clb,\mu)$  
$$s(F_n,f) \raro \bar{f}$$ in $L^2$ whenever $F_n$ is a tempered F\o lner sequence. In the following we put his 
result in an abstract set up.  

\vsp 
\begin{thm} 
[Sh] Let $G$ be a second countable locally compact amenable group and $F_n$ be a tempered F\o lner sequence for $G$. 
Let $g \raro u_g$ be a unitary representation on a Hilbert space $\clh$ so that the map $g \raro <f,u_gh>$ be continuous for any two elements $f,h \in \clh$. Then   
\be 
s(F_n,f) \raro Pf
\ee
in strong operator topology as $n \raro \infty$ where $P$ is the projection on the closed subspace 
$\{f: u_gf=f,\; \forall g \in G \}$ and  
$$s(F,f) = {1 \over |F|} \int_F u_gfdm(g)$$
\end{thm}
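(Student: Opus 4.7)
The plan is to establish this via the orthogonal decomposition strategy at the heart of von Neumann's mean ergodic theorem, adapted to amenable averages. Let $\clh_0 = \{f \in \clh : u_g f = f,\; \fal g \in G\}$, so that $P$ is the orthogonal projection onto $\clh_0$ and $\clh = \clh_0 \op \clh_0^{\perp}$. Since each operator $s(F_n, \cdot)$ is a contraction on $\clh$, it suffices to establish the convergence on each summand: on $\clh_0$, trivially, $s(F_n, f) = f = Pf$ for every invariant $f$.

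On $\clh_0^{\perp}$ the plan is first to identify the closed linear span of the coboundary vectors $\clv = \{u_{g_0}h - h : g_0 \in G, h \in \clh\}$ with $\clh_0^{\perp}$, using the elementary duality that $v \in \clv^{\perp}$ is equivalent to $u_{g_0}v = v$ for every $g_0$, i.e.\ $v \in \clh_0$. Combined with the uniform contractivity bound $\|s(F_n, \cdot)\| \le 1$, a standard $3\ve$ density argument then reduces the task to showing $\|s(F_n, u_{g_0}h - h)\| \raro 0$ for each fixed $g_0 \in G$ and $h \in \clh$. The key computation exploits left-invariance of the Haar measure under the change of variable $g \raro g_0 g$:
$$u_{g_0} s(F_n, h) - s(F_n, h) = \frac{1}{|F_n|}\int_{g_0 F_n \setminus F_n} u_g h\, dm(g) - \frac{1}{|F_n|}\int_{F_n \setminus g_0 F_n} u_g h\, dm(g),$$
whose norm is bounded by $\|h\|\, |g_0 F_n \Delta F_n|/|F_n| \raro 0$ by the F\o lner property applied with $K = \{e, g_0\}$. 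Thus $v_n := s(F_n, h)$ is asymptotically $u_{g_0}$-invariant for every $g_0$; by weak compactness of the bounded family $\{v_n\}$ together with the identity $\langle v_n, Pw\rangle = \langle h, Pw\rangle = \langle Ph, w\rangle$ (using $u_g^{*}Pw = Pw$), every weak cluster point must equal $Ph$, so $v_n \raro Ph$ weakly.

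The main obstacle I anticipate is the upgrade from weak to strong convergence on $\clh_0^{\perp}$. Here the identity $\|v_n - Ph\|^2 = \|v_n\|^2 - \|Ph\|^2$, valid because $\langle v_n, Ph\rangle = \|Ph\|^2$ for every $n$, reduces the problem to $\|v_n\| \raro \|Ph\|$. I would address this either by a direct estimate on coboundaries, where after the change of variable $g \raro g g_0$ one obtains $\|s(F_n, u_{g_0}h - h)\| \le \|h\|\, |F_n g_0 \Delta F_n|/|F_n|$, or equivalently by passing to a two-sided F\o lner sequence, which always exists in a locally compact amenable group and makes both left and right translation estimates available simultaneously. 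The tempered hypothesis is not actually needed for the strong-operator (mean-ergodic) statement recorded in Theorem 3.1; it becomes essential only in the pointwise companion of Lindenstrauss. Continuity of $g \raro u_g$ is used only to give meaning to the Bochner integral defining $s(F_n, \cdot)$ and to justify the density reduction to coboundaries.
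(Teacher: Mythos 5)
Your skeleton is the standard one (the paper itself offers no argument here beyond deferring to Shulman, so an explicit proof is welcome), and the first half is sound: the identification of $\clh_0^{\perp}$ with the closed span of coboundaries, the estimate $\|u_{g_0}s(F_n,h)-s(F_n,h)\|\le \|h\|\,|g_0F_n\Delta F_n|/|F_n|$ from left-invariance of $m$, the weak convergence $v_n=s(F_n,h)\rightharpoonup Ph$, and the reduction $\|v_n-Ph\|^2=\|v_n\|^2-\|Ph\|^2$ are all correct. But the step you flag as the "main obstacle" is a genuine gap, not a technicality, and neither of your two escape routes closes it. Asymptotic invariance plus weak convergence does \emph{not} imply norm convergence: in $\ell^2(\mathbb{Z})$ with the shift $u$, the vectors $w_n=n^{-1/2}1_{[1,n]}$ satisfy $\|u w_n-w_n\|\to 0$ and $w_n\rightharpoonup 0$ while $\|w_n\|=1$. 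So your primary computation, which controls $u_{g_0}s(F_n,\cdot)-s(F_n,\cdot)$ (i.e.\ the \emph{left} translate $g_0F_n$), only ever yields weak convergence. What one actually needs is that $s(F_n,\cdot)$ annihilates coboundaries, i.e.\ control of $s(F_n,u_{g_0}h)-s(F_n,h)$, and the substitution $g\mapsto gg_0$ there produces the \emph{right} translate $F_ng_0$ together with a factor of the modular function $\Delta(g_0)$ (which you drop): the bound $|F_ng_0\Delta F_n|/|F_n|\to 0$ is a right-F\o lner condition that is not implied by the left-F\o lner hypothesis $|F\Delta KF|<\delta|F|$ assumed in the statement. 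Your alternative of "passing to a two-sided F\o lner sequence" proves convergence along a \emph{different} sequence of sets, not along the given $(F_n)$, so it does not prove the theorem as stated.

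The clean way to see where the asymmetry lies: with $A_n=\frac{1}{|F_n|}\int_{F_n}u_g\,dm(g)$, left-invariance gives $\|(u_{g_0}-I)A_n\|_{op}\le |g_0F_n\Delta F_n|/|F_n|\to 0$, hence by taking adjoints $\|A_n^{*}(u_{g_0}-I)\|_{op}\to 0$; so it is the \emph{adjoint} averages $A_n^{*}f=\frac{1}{|F_n|}\int_{F_n}u_{g^{-1}}f\,dm(g)$ that kill coboundaries, fix $\clh_0$, and therefore converge strongly to $P$ by your density argument --- no temperedness needed. (This is exactly the operator relevant to the concrete averages $s(F,f)(x)=\frac{1}{|F|}\int_F f(gx)\,dm(g)$ of the paper, and it is also the operator $s'(F_n,\cdot)$ used in the proof of Theorem 3.2.) For $A_n$ itself you only get $A_n\to P$ in the weak operator topology, and upgrading this requires the additional estimate $\frac{1}{|F_n|}\int_{F_n}|g^{-1}F_n\Delta F_n|/|F_n|\,dm(g)\to 0$ (so that $\|A_nf\|^2=\langle f,A_nf\rangle+o(1)\to\|Pf\|^2$), which does not follow from the F\o lner property alone, nor from temperedness. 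So to complete the proof you must either prove the theorem for the $u_{g^{-1}}$-averages (sufficient for the paper's applications), assume a two-sided F\o lner sequence, or supply this extra averaging estimate; as written the argument establishes only weak convergence of $s(F_n,\cdot)$.
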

\vsp 
\begin{proof} 
It goes along the same line [Sh] once we notice his proof does not require to use explicit form 
of the group action on the Hilbert space $L^2(X,\clb,\mu)$. We skip the details. 
\end{proof}  

\vsp 
\begin{thm} Let $G,F_n$ be as in Theorem 3.1. Let $g \raro \alpha_g$ be a group of $*$-automorphism on $\clm$ 
such that $g \raro \psi(\alpha_g(x))$ is a measurable function for each $x \in \clm$ and $\psi \in \clm_*$ and 
$\phi$ be a faithful normal invariant state for $\alpha_g$. Then for any $\psi \in \clm_*$
$$||\psi \circ s(F_n) - \psi \circ E || \raro 0$$
as $n \raro \infty$ where 
$$s_n(x) = {1 \over |F_n|} \int_{F_n}\alpha_g(x)dm(g)$$
is a sequence of unital completely positive map and 
$E$ is the normal norm one projection from $\clm$ to 
$$\cln=\{x \in \clm: \alpha_g(x)=x \}$$     
\end{thm}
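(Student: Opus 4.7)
The plan is to reduce Theorem 3.2 to an application of Theorem 1.1 for the sequence of unital completely positive maps $s_n(x) = \frac{1}{|F_n|}\int_{F_n} \alpha_g(x)\, dm(g)$, with the strong operator convergence hypotheses of Theorem 1.1 supplied by Theorem 3.1 applied to the natural unitary representation of $G$ on $\clh_\phi$. Since each $\alpha_g$ preserves $\phi$, the prescription $u_g x\zeta_\phi = \alpha_g(x)\zeta_\phi$ extends to a unitary $u_g$ on $\clh_\phi$, and $g \mapsto u_g$ is a group homomorphism. The contraction associated with $s_n$ via (3) is then $T_n = \frac{1}{|F_n|}\int_{F_n} u_g\, dm(g)$, while the dual contraction $T'_n$ coincides with $\frac{1}{|F_n|}\int_{F_n} u_g^*\, dm(g) = \frac{1}{|F_n|}\int_{F_n} u_{g^{-1}}\, dm(g)$, itself a F\o lner average over the unitary representation $g \mapsto u_{g^{-1}}$.

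Next I would verify the weak continuity of $g \mapsto u_g$ required by Theorem 3.1. For $f = y'\zeta_\phi$ and $h = x\zeta_\phi$ with $y' \in \clm'$ and $x \in \clm$, one has $\langle f, u_g h\rangle = \psi_{y'}(\alpha_g(x))$ for the normal functional $\psi_{y'}(\cdot) = \langle y'\zeta_\phi,\cdot\,\zeta_\phi\rangle \in \clm_*$, hence measurable in $g$ by hypothesis; density of such vectors in $\clh_\phi$ together with a routine approximation argument yields weak measurability on all of $\clh_\phi$, which on a separable Hilbert space and for a second countable locally compact group upgrades to strong (hence weak) continuity by a classical theorem. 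Theorem 3.1 then gives $T_n \raro P$ in strong operator topology, where $P$ is the projection onto the $G$-invariant vectors in $\clh_\phi$; since the invariant subspaces of $g \mapsto u_g$ and $g \mapsto u_{g^{-1}}$ coincide, the same argument applied to the dual representation yields $T'_n \raro P$ in strong operator topology.

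With $T_n \raro P$ and $T'_n \raro P$ strongly, the second half of Theorem 1.1 produces $\tau \in P^{1 \over 2}_\phi(\clm)$ with $\tau(x)\zeta_\phi = Px\zeta_\phi$ and $\|\psi \circ s_n - \psi \circ \tau\| \raro 0$ in $\clm_*$ for every $\psi \in \clm_*$. To finish I would identify $\tau$ with the required projection $E$: since $Px\zeta_\phi$ is $G$-invariant in $\clh_\phi$, the separating property of $\zeta_\phi$ for $\clm$ forces $\alpha_g(\tau(x)) = \tau(x)$ for all $g$, so $\tau(x) \in \cln$; and for $x \in \cln$ every $s_n(x) = x$, whence $\tau$ restricted to $\cln$ is the identity. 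Unitality of each $s_n$ combined with $P\zeta_\phi = \zeta_\phi$ gives $\tau(I) = I$ by separation, so $E := \tau$ is a normal norm-one projection of $\clm$ onto $\cln$. The main obstacle is the measurability-to-strong-continuity upgrade for $g \mapsto u_g$, which rests on separability of $\clh_\phi$; once that is in place the argument is a direct transcription of the pairing of Theorem 1.1 with the mean ergodic input, exactly as in the discrete-time treatment of Theorem 2.3.
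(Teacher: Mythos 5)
Your proposal is correct and follows essentially the same route as the paper: construct the unitary representation $u_g$ on $\clh_{\phi}$, invoke Theorem 3.1 for both $g \mapsto u_g$ and $g \mapsto u_{g^{-1}}$ to obtain strong convergence of $T_n$ and $T'_n$ to the invariant projection $P$, then feed this into Theorem 1.1 (via the totality of the functionals $\psi_{y'_1,y'_2}$) to get norm convergence in $\clm_*$ and identify the limit with the norm one projection $E$ onto $\cln$. Your added care in upgrading the measurability hypothesis on $g \mapsto \psi(\alpha_g(x))$ to the continuity required by Theorem 3.1 addresses a point the paper passes over silently, but it does not change the structure of the argument.
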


\vsp 
\begin{proof} We set unitary representation $g \raro u_g:x\zeta_{\phi}=\alpha_g(x)\zeta_{\phi},\;x \in \clm$ and 
set as before $s(F_n,f) = {1 \over |F_n|} \int_{F_n} u_gfdm(g)$ for $f \in [\clm\zeta_{\phi}]$. By Theorem 3.1 we have $s(F_n,f) \raro Pf$ in strongly for all $f \in \clh_{\phi}$
where $P$ is the projection on invariant vectors $\{f \in \clh_{\phi}:u_gf=f:\;g \in G \}$. Further $s_n(x) \raro E(x)$ in weak$^*$ topology by weak$^*$ compactness of the unit ball to some element 
$E(x)$ and faithfulness of the normal state $\phi$. So in particular we have $E(x)\zeta_{\phi} \in P(\clh_{\phi})$ and thus $E(x)\zeta_{\phi}=\alpha_g(E(x))\zeta_{\phi}$ for all $g \in G$. This shows that 
$E(x) \in \cln$ by separating property of $\zeta_{\phi}$ where 
$\cln=\{x:\alpha_g(x)=x;g \in G \}$. 

\vsp 
We claim that $P$ is the close span of the vectors in $\{x\zeta_{\phi}: x \in \cln \}$. One way inclusion is trivial. For the reverse direction, let $f$ be an element such that $u_gf=f$ for 
all $g \in G$ and orthogonal to $\{x\zeta_{\phi}:x \in \cln \}$. Then $<f,E(x)\zeta_{\phi}>=0$ for all $x \in \clm$. However we also have $<f,E(x)\zeta_{\phi}>=\mbox{lim}_{n \raro \infty}<f,s_n(x)\zeta_{\phi}>=\mbox{lim}_{n \raro \infty}< s(F^{-1}_n,f),x\zeta_{\phi}> = <f,x\zeta_{\phi}>$ since $u_gf=f$ for all $g \in G$ where $F^{-1}=\{g \in G: g^{-1} \in F \}$. 
$\zeta_{\phi}$ being cyclic for $\clm$, we conclude that $f=0$. This shows claimed equality of subspaces.  

\vsp 
Now by Theorem 1.1 applied to sequence of maps $x \raro s_n(x)$, which is a completely positive unital map on $\clm$, to conclude 
$$||\psi_{y'_1,y'_2} \circ s_n - \psi_{y'_1,y'_2} \circ s_m || \le ||(s'_n-s'_m)((y'_1)^*y'_2)\zeta_{\phi}|| 
\raro 0$$ as $m,n \raro \infty$ as $s'(F_n,f) \raro Pf$ in strongly by Theorem 3.1 where 
$$s'(F_n,f)= {1 \over |F_n|} \int u_{g^{-1}}fdm(g)$$ for $f \in \clh_{\phi}$. Thus the result follows along the same line of Theorem 1.1 since the set 
$\{ \psi_{y'_1,y'_2}:y_1',y_2' \in \clm' \}$ is total in $\clm_*$ in the Banach space topology. 
\end{proof}

\bigskip
{\centerline {\bf REFERENCES}}

\begin{itemize}

\bigskip 
\item {[AC]} Accardi, L., Cecchini, C.: Conditional expectations in von Neumann algebras and a theorem of Takesaki, Journal of Functional Analysis 45 (1982), 245-273, 

\item{[BR]} Bratteli, O., Robinson, D.W. : Operator algebras and quantum statistical mechanics, I Springer 1981.

\item {[CE]} Choi, M.D., Effros, E.: Injectivity and operator spaces, J. Func. Anal. 24, 156–209. (1974)

\item{[Di]} Dixmier, J.: von Neumann algebras, North Holland 1981. 

\item{[De]} Dell'Antonio, G. F.: On the limits of sequences of normal states. Comm. Pure Appl. Math. 20 (1967), pp 413-429. 

\item{[Fr]} Frigerio, A.: Stationary states of quantum dynamical semigroups. Comm. Math. Phys. 63 (1978), no. 3, 269-276. 

\item{[JXu]} Junge, M. Xu, Q.: Non-commutative maximal ergodic theorems. J. Amer. Math. Soc. 20 (2007), no. 2, 385-439. 

\item {[Ka]} Kadison, Richard V.: A generalized Schwarz inequality and algebraic invariants for operator algebras,  Ann. of Math. (2)  56, 494-503 (1952). 

\item{[Ku]} Kümmerer, B.: A non-commutative individual ergodic theorem. Invent. Math.  46  (1978), no. 2, 139-145.

\item{[Kr]} Krengel, U.: Ergodic Theorems, De Gruyter studies in Mathematics 6, Berlin. New York 1985.

\item{[La]} Lance, E.C.: Ergodic theorems for convex sets and operator algebras. Invent. Math. 37 (1976), no. 3, 201-214.

\item{[Lin]} Lindenstrauss, E.: Point-wise theorems for amenable groups. Invent. Math. 146 (2001), no. 2, pp 259-295. 

\item{[Ha]} Halmos, P.R.: Lecture on ergodic theory, Chelsea Pub. Co. New York 1956.

\item{[Mo1]} Mohari, A.: Markov shift in non-commutative probability, Jour. Func. Anal. 199 (2003) 189-209.  

\item{[Mo2]} Mohari, A.: Pure inductive limit state and Kolmogorov's property, J. Funct. Anal. vol 253, no-2, 584-604 (2007)
Elsevier Sciences.

\item{[Mo3]} Mohari, A.: Pure inductive limit state and Kolmogorov's property -II, http://arxiv.org/abs/1101.5961. To appear in Journal of Operator Theory. 

\item{[Ne]} Nelson, E. The adjoint Markoff process. Duke Math. J. 25 (1958) 671-690. 

\item {[OP]} Ohya, M., Petz, D.: Quantum entropy and its use, Text and monograph in physics, Springer 1995. 

\item{[Sh]} Shulman, A.: Maximal ergodic theorem in groups, Dep. Lit. NIINTI, No.2184 (1988).  

\item{[St]} Stinespring, W. F.: Positive functions on $C^*$-algebras.  Proc. Amer. Math. Soc.  6,  (1955)  
211-216. 

\item{[Tak]} Takesaki, M.: Theory of operator algebra -I, Springer 2001.  

\item{[Um]} Umegaki, H.: Conditional expectation in an operator algebra. Tôhoku Math. J. (2) 6, (1954) 
177-181.

\end{itemize}

\end{document}